\numberwithin{equation}{section}
\newtheorem{dfn}{Definition}[section]
\newtheorem{thm}[dfn]{Theorem}
\newtheorem{lma}[dfn]{Lemma}
\newtheorem{crlre}[dfn]{Corollary}
\newtheorem{rmrk}[dfn]{Remark}
\DeclarePairedDelimiterX{\norm}[1]{\lVert}{\rVert}{#1}
\begin{document}

\title{Trace formula for contractions and it's representation in $\mathbb{D}$}

\author[Chattopadhyay] {Arup Chattopadhyay}
\address{Department of Mathematics, Indian Institute of Technology Guwahati, Guwahati, 781039, India}
\email{arupchatt@iitg.ac.in, 2003arupchattopadhyay@gmail.com}

\author[Sinha]{Kalyan B. Sinha}
\address{Theoretical Sciences Unit, Jawaharlal Nahru Centre For Advanced Scientific Research, Bangalore, 560064, India}
\email{kbs@jncasr.ac.in, kbs\_jaya@yahoo.co.in}

\subjclass[2010]{47A20, 47A55, 47A56, 47B10, 42B30, 30H10}

\keywords{Krein's trace formula, Spectral shift function, Self adjoint operators, Unitary operators, Contractions, Unitary dilations}

\begin{abstract}
	The aim of this article is twofold: give a short proof of the existence of real spectral shift function and the associated trace formula for a pair of contractions, the difference of which is trace-class and one of the two a strict contraction, so that the set of assumptions is minimal in comparison to those in all the existing proofs. The second one is to find a trace formula for differences of functions of contraction and its adjoint, in which case, the integral in the formula is over the unit disc and has an expression surprisingly similar to the Helton-Howe formula.   
\end{abstract}
\maketitle

\section{Introduction}

The notion of spectral shift function (SSF) for a trace-class perturbation of a self-adjoint operator originated in the work of Lifshitz \cite{L}, followed  by Krein in \cite{K}, in which it was shown that given a pair of (not necessarily bounded) self-adjoint operators $H$, $H_0$ such that $H-H_0$ is trace-class, there exists a unique real-valued function $L^1(\mathbb{R})$-function (SSF) $\xi$ satisfying \textquotedblleft \textit{Krein's Trace Formula}\textquotedblright:
\begin{equation} \label{eqsec1}
	\text{Tr}~\big\{\phi(H)-\phi(H_0)\big\} = \int_{\mathbb{R}} \phi'(\lambda)~\xi(\lambda)~d\lambda,
\end{equation}
for a large class of functions $\phi$. Krein's original proof uses complex function theory whereas an alternative proof of \eqref{eqsec1} by  Voiculescu \cite{V} uses the idea of quasi-diagonalization for bounded self-adjoint operators. Later, Sinha and Mohapatra (in \cite{SM1,SM2}) adapted the
quasi-diagonalization method to the cases of unbounded self-adjoint and unitary operators. For a pair $(U,U_0)$ of unitary operators with $U-U_0$ is trace-class, Krein's trace formula is
\begin{equation}\label{equ}
	\text{Tr}~\big\{\phi(U)-\phi(U_0)\big\} =\int_{0}^{2\pi} \phi'(\theta)~\tilde{\xi}(\theta)~d\theta
	= \int_{\mathbb{T}} \phi'(t)~\xi(t)~dt,
\end{equation}
for $\phi:\mathbb{T}\longrightarrow \mathbb{C}$ with an absolutely convergent Fourier series for $\phi'$ and with the spectral shift function $\xi$ a real-valued $L^1(\mathbb{T})$-function, unique upto an additive constant. The classes of function $\phi$ given here, for which formula \eqref{eqsec1} and \eqref{equ} hold, is not optimal and improved results in this direction was obtained by Peller and co-authors in \cite{AP1}.

For a pair of contractions $T$, $T_0$ with $T-T_0$ is trace-class,  Neidhardt \cite{N} initiated the study of trace formula, to be followed by others in \cite{AN1,MNV3}. In all these attempts, there were additional hypotheses involving the associated defect operators, for example all the defect operators $D_T:=(I-T^*T)^{\frac{1}{2}}$, $D_{T_0}:=(I-T_0T_0^*)^{\frac{1}{2}}$, $D_{T^*}:=(I-TT^*)^{\frac{1}{2}}$, $D_{T_0^*}:=(I-T_0T_0^*)^{\frac{1}{2}}$ 
were assumed to be trace-class as well in \cite{N}. On the other hand, as an example the authors in \cite{MNV3} constructs a pair $(T,T_0)$ of contractions such that $T-T_0$ is trace-class, and $T_0$ is Fredholm (that is, dim $Ker (T_0)$ = dim $Ker (T_0^*)$ $<\infty$) of index 0, for which a real spectral shift function is obtained. 

 The main purpose here is to prove the trace formula like that in \eqref{equ} with a real spectral shift function for a pair of contractions $T$, $T_0$, one of which is a strict contraction (that is, either $\|T_0\|$ or $\|T\|< 1$) and $T-T_0$ is trace-class \textit{with no further assumptions as in earlier studies}. As it turns out, the trace formula involves an integral, as on the right-hand side of \eqref{equ}, which is supported on $\mathbb{T}$. On the other hand, the spectra of both $T$ and $T_0$ are compact subsets of the closed unit disc $\overline{\mathbb{D}}$. Clearly, it will be desirable to have a trace formula, where the integral will be supported on the unit disc $\mathbb{D}$ instead of $\mathbb{T}$. For this, we need to consider functions of not just $T$ and $T_0$, but functions of $(T, T^*)$ and of $(T_0, T_0^*)$, and we show that the trace of the difference of such functions has an integral expression, in which the integral is supported on $\mathbb{D}$ (with planar Lebesgue measure), containing the natural harmonic extension of the real spectral shift function $\xi\in L^1(\mathbb{T})$ to $\tilde{\xi}$ on $\mathbb{D}$ by the Poisson integral formula. Furthermore, it turns out that this integral formula bears an intriguing resemblance to the Helton-Howe formula (see \cite{HH, CS}).

Here we denote by $\mathcal{B}(\mathcal{H})$ the collection of all bounded linear operators on $\mathcal{H}$ and $\mathcal{N}(B)$ denotes the null space of the operator $B \in \mathcal{B}(\mathcal{H})$, $\mathcal{B}_1(\mathcal{H})$ stands for the Banach space of trace-class operators on $\mathcal{H}$ and $\mathbb{R}$ and $\mathbb{N}$ for the set of real numbers and natural numbers respectively. The rest of the paper is organized as follows:
In section 2, we give the existence of real spectral shift function for a large class of pairs of contractions $(T, T_0)$ with trace class difference, and section 3 deals with extending the trace formula for functions of $(T, T^*)$ and $(T_0, T_0^*)$ with the integral over the unit disc $\mathbb{D}$.

\section{Trace formula for contractions}
In this section, a short and direct proof of the trace formula is given for a pair $(T,T_0)$ of contractions, with a real-valued SSF, when $T-T_0\in \mathcal{B}_1(\mathcal{H})$ and one of the two, say $T_0$, is a strict contraction, that is $0\leq T_0 <I$ or equivalently $D_{T_0}\geq \delta I>0$ for some $\delta>0$. In fact, the proof involved is symmetric with respect to the interchange of $T$ and $T_0$. The method employed here is similar, to that of \cite{MNV3} in the sense that the Nagy-dilation of a contraction in $\mathcal{H}$ to a unitary in $l^2_{\mathbb{Z}}(\mathcal{H})$ is used.  However, unlike in Theorem 9.4 of \cite{MNV3}, \emph{we do not assume that the differences of the dilated unitaries is trace-class, rather we prove it}. 

\begin{lma}\label{thm1}
	Let $A$ and $B$ be two positive contractions on a separable infinite-dimensional Hilbert space $\mathcal{H}$.
	\vspace{0.1in}
	
	\noindent $(i)$ If furthermore, $\mathcal{N} (B)= \{0\}$, then for every $f\in \mathcal{H}$
	\begin{equation}\label{eq0}
		\big(A-B\big) f =  \int_0^{+\infty} \textup{e}^{-tA}~\big(A^2-B^2\big) ~\textup{e}^{-tB} f ~dt,
	\end{equation}
	where the right hand side of \eqref{eq0} exists as an improper strong Riemann-Bochner integral.
	\vspace{0.1in}
	
	\noindent $(ii)$ If,  $B\geq \delta I> 0$ for some positive $\delta$ and if $\big(A^2-B^2\big)\in \mathcal{B}_1(\mathcal{H})$, then $\big(A-B\big)\in  \mathcal{B}_1(\mathcal{H})$.  
\end{lma}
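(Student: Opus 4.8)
The plan is to handle the two parts in sequence: derive (i) from a first-order differential identity for the operator-valued map $t\mapsto e^{-tA}(A-B)e^{-tB}$, and then bootstrap (ii) from (i) by upgrading the merely strong integral to a trace-norm (Bochner) integral using the strict positivity of $B$.

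For part (i), since $A,B$ are bounded and positive the semigroups $e^{-tA},e^{-tB}$ are norm-differentiable, so by the product rule the map $t\mapsto e^{-tA}(A-B)e^{-tB}$ is differentiable in operator norm with
\[
\frac{d}{dt}\big[e^{-tA}(A-B)e^{-tB}\big]=-e^{-tA}\big[A(A-B)+(A-B)B\big]e^{-tB}=-e^{-tA}(A^2-B^2)e^{-tB},
\]
where I use only the algebraic identity $A(A-B)+(A-B)B=A^2-B^2$ (no commutativity of $A,B$ is needed). Integrating over $[0,T]$ by the fundamental theorem of calculus for Banach-space valued maps gives
\[
\int_0^T e^{-tA}(A^2-B^2)e^{-tB}\,dt=(A-B)-e^{-TA}(A-B)e^{-TB}.
\]
It then remains to show that the boundary term $e^{-TA}(A-B)e^{-TB}f$ tends to $0$ as $T\to+\infty$ for each fixed $f$, and this is exactly where the hypothesis $\mathcal{N}(B)=\{0\}$ enters: by the spectral theorem and dominated convergence, $e^{-TB}f\to P_{\mathcal{N}(B)}f=0$ strongly, whence, using $\|e^{-TA}\|\le1$ and $\|A-B\|\le1$, one gets $\|e^{-TA}(A-B)e^{-TB}f\|\le\|e^{-TB}f\|\to0$. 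Letting $T\to+\infty$ yields \eqref{eq0}, the right-hand side existing as the asserted improper strong Riemann--Bochner integral.

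For part (ii), I would feed the stronger hypothesis $B\ge\delta I$ into the representation \eqref{eq0}. The point is that $B\ge\delta I$ forces the uniform exponential bound $\|e^{-tB}\|=\sup_{\lambda\ge\delta}e^{-t\lambda}=e^{-\delta t}$, which, combined with $\|e^{-tA}\|\le1$ and the ideal property of the trace norm, turns the integrand into a trace-class valued map with
\[
\big\|e^{-tA}(A^2-B^2)e^{-tB}\big\|_1\le\|e^{-tA}\|\,\|A^2-B^2\|_1\,\|e^{-tB}\|\le e^{-\delta t}\,\|A^2-B^2\|_1 .
\]
Since $A^2-B^2\in\mathcal{B}_1(\mathcal{H})$, the right-hand side is integrable, $\int_0^{+\infty}e^{-\delta t}\,\|A^2-B^2\|_1\,dt=\delta^{-1}\|A^2-B^2\|_1<\infty$, and the map is norm-continuous into $\mathcal{B}_1(\mathcal{H})$, hence Bochner integrable there. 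Thus $X:=\int_0^{+\infty}e^{-tA}(A^2-B^2)e^{-tB}\,dt$ exists as a genuine $\mathcal{B}_1(\mathcal{H})$-valued integral, so $X\in\mathcal{B}_1(\mathcal{H})$.

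Finally I would identify $X$ with $A-B$: convergence of the partial integrals to $X$ in trace norm implies operator-norm, hence strong, convergence, so $\int_0^T e^{-tA}(A^2-B^2)e^{-tB}f\,dt\to Xf$ for every $f$, while part (i) asserts these same partial integrals converge strongly to $(A-B)f$; therefore $A-B=X\in\mathcal{B}_1(\mathcal{H})$. I expect the main obstacle to be precisely this reconciliation of the two notions of the integral, together with the decay estimate for the boundary term in (i). The subtle point is that under the bare hypothesis $\mathcal{N}(B)=\{0\}$ the naive bound $\|e^{-tA}(A^2-B^2)e^{-tB}\|\lesssim t^{-1}$ is not integrable, so \eqref{eq0} converges only conditionally (strongly); it is only the strict positivity $B\ge\delta I$ in (ii) that restores absolute convergence in trace norm and thereby the trace-class conclusion.
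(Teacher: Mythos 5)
Your proposal is correct and follows essentially the same route as the paper: differentiate $t\mapsto e^{-tA}(A-B)e^{-tB}$ to get the identity $\frac{d}{dt}[e^{-tA}(A-B)e^{-tB}]=-e^{-tA}(A^2-B^2)e^{-tB}$, kill the boundary term via the spectral theorem and dominated convergence using $\mathcal{N}(B)=\{0\}$, and then in (ii) use $\|e^{-tB}\|\le e^{-\delta t}$ together with the ideal property of the trace norm to upgrade the integral to one convergent in $\mathcal{B}_1(\mathcal{H})$. Your explicit reconciliation of the trace-norm limit with the strong limit from (i), and your remark that the integral in (i) is only conditionally convergent, are slightly more detailed than the paper's treatment but do not change the argument.
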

\begin{proof}
	$(i)$~ By the definition of strong Riemann-Bochner integral \cite{Y} and by the observation that 
	\[
	\text{strong-}~\frac{d}{dt} \Big\{\textup{e}^{-tA}~\big(A-B\big) ~\textup{e}^{-tB} f \Big\} = -~\textup{e}^{-tA}~\big(A^2-B^2\big) ~\textup{e}^{-tB} f,  
	\]
	one gets that for every $f\in \mathcal{H}$,
	\begin{equation}\label{eq1}
		\big(A-B\big) f- \textup{e}^{-sA}~\big(A-B\big) ~\textup{e}^{-sB} f
		=  \int_0^{s} \textup{e}^{-tA}~\big(A^2-B^2\big) ~\textup{e}^{-tB} f ~dt
	\end{equation} 
	and for $s> s'> 0 $,
	\begin{equation}\label{eq2}
		-\int_{s'}^{s} \textup{e}^{-tA}~\big(A^2-B^2\big) ~\textup{e}^{-tB} f ~dt
		= \textup{e}^{-sA}~\big(A-B\big) ~\textup{e}^{-sB} f -\textup{e}^{-s'A}~\big(A-B\big) ~\textup{e}^{-s'B} f.
	\end{equation}
	By hypothesis $\mathcal{N} (B)= \{0\}$, this implies by the spectral theorem that
	\[
	\left\|\textup{e}^{-sB} f\right\|^2 = \int_{0-}^{1} \textup{e}^{-2s\lambda}~\left\|\mathcal{E}_{B}(d\lambda)f\right\|^2 \longrightarrow 0 \quad \text{as}\quad s\longrightarrow +\infty,
	\]
	by the dominated convergence theorem, where $\mathcal{E}_B(\cdot)$ is the spectral measure corresponding to the operator $B$, and hence, the right hand side of \eqref{eq2} converges to $0$ as $s'\longrightarrow +\infty$ for every $f\in \mathcal{H}$. This proves the existence of the improper strong Riemann-Bochner integral which appears in \eqref{eq0} as well as the equality in
	\eqref{eq0}. 
	\vspace{0.1in}
	
	$(ii)$~ By $(i)$, we have an identity in  $\mathcal{B}(\mathcal{H})$: 
	\begin{equation}\label{sec2eq}
		A-B =  \int_0^{+\infty} \textup{e}^{-tA}~\big(A^2-B^2\big) ~\textup{e}^{-tB} ~dt,
	\end{equation}  
	since the hypothesis in $(ii)$ in particular implies that $\mathcal{N} (B)= \{0\}$. Moreover,  the right hand side integral in \eqref{sec2eq} converges in $\mathcal{B}_1(\mathcal{H})$-topology since $\left\| \textup{e}^{-tA}\right\|\leq 1$ and $\left\| \textup{e}^{-tB}\right\|\leq  \textup{e}^{-\delta t}$.    
\end{proof}

The following corollary is a simple consequence of the above lemma.
\begin{crlre}\label{cor1}
	Let $T$ and $T_0$ be two contractions on a separable infinite dimensional Hilbert space $\mathcal{H}$ such that one of $T$ and $T_0$ is a strict contraction and $T-T_0\in \mathcal{B}_1(\mathcal{H})$. Then both $D_T-D_{T_0}$ and $D_{T^*}-D_{T_0^*}$ are trace class operators. 
\end{crlre}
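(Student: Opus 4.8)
The plan is to reduce both assertions to part $(ii)$ of Lemma \ref{thm1}. The guiding observation is that while we want to control the \emph{square roots} $D_T$, $D_{T_0}$ (resp.\ $D_{T^*}$, $D_{T_0^*}$), it is their \emph{squares} that manifestly differ by a trace-class operator; bridging this gap between $A^2-B^2\in\mathcal{B}_1(\mathcal{H})$ and $A-B\in\mathcal{B}_1(\mathcal{H})$ is precisely what Lemma \ref{thm1}$(ii)$ provides.

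First I would treat $D_T-D_{T_0}$. Since $D_T^2=I-T^*T$ and $D_{T_0}^2=I-T_0^*T_0$, one has
\[
D_T^2-D_{T_0}^2 = T_0^*T_0-T^*T = T_0^*(T_0-T)+(T_0-T)^*T.
\]
As $T-T_0\in\mathcal{B}_1(\mathcal{H})$ and $T_0^*,T$ are bounded, each summand lies in $\mathcal{B}_1(\mathcal{H})$, hence $D_T^2-D_{T_0}^2\in\mathcal{B}_1(\mathcal{H})$. Next I would extract the uniform lower bound required by Lemma \ref{thm1}$(ii)$. Supposing $T_0$ is the strict contraction, so $\|T_0\|<1$, we get $T_0^*T_0\leq\|T_0\|^2 I$, whence $D_{T_0}^2=I-T_0^*T_0\geq(1-\|T_0\|^2)I$ and therefore $D_{T_0}\geq\delta I$ with $\delta=\sqrt{1-\|T_0\|^2}>0$. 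Since $D_T$ and $D_{T_0}$ are positive contractions, Lemma \ref{thm1}$(ii)$ applied with $A=D_T$, $B=D_{T_0}$ yields $D_T-D_{T_0}\in\mathcal{B}_1(\mathcal{H})$. If instead $T$ is the strict contraction, I would interchange the roles of $A$ and $B$, using that $D_{T_0}^2-D_T^2$ is equally trace class and that now $D_T\geq\delta'I>0$.

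The argument for $D_{T^*}-D_{T_0^*}$ is entirely parallel with the starred operators. Here
\[
D_{T^*}^2-D_{T_0^*}^2 = T_0T_0^*-TT^* = T_0(T_0-T)^*+(T_0-T)T^*\in\mathcal{B}_1(\mathcal{H}),
\]
and the strict-contraction hypothesis gives $\|T_0^*\|=\|T_0\|<1$, so $D_{T_0^*}^2=I-T_0T_0^*\geq(1-\|T_0\|^2)I$ and hence $D_{T_0^*}\geq\delta I>0$; Lemma \ref{thm1}$(ii)$ then delivers $D_{T^*}-D_{T_0^*}\in\mathcal{B}_1(\mathcal{H})$, with the symmetric variant when $T$ is the strict contraction.

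I do not anticipate a genuine obstacle, since Lemma \ref{thm1}$(ii)$ carries out all the analytic work. The only two points demanding care are choosing the factorization of the difference of squares so that the trace-class factor $T-T_0$ (or $(T-T_0)^*$) is exposed, and verifying that the strict-contraction assumption really produces a uniform positive lower bound on the relevant defect operator. Once both are secured, the conclusion follows immediately, and the symmetry of the hypothesis in $T$ and $T_0$ covers the two cases uniformly.
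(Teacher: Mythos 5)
Your proposal is correct and follows essentially the same route as the paper: factor the difference of squares of the defect operators to expose the trace-class factor $T-T_0$, note that strict contractivity of $T_0$ gives $D_{T_0}\geq \delta I$ and $D_{T_0^*}\geq \delta I$ for some $\delta>0$, and invoke Lemma~\ref{thm1}$(ii)$ with $A=D_T$, $B=D_{T_0}$ and $A=D_{T^*}$, $B=D_{T_0^*}$. The only (immaterial) differences are your choice of algebraic splitting of $T_0^*T_0-T^*T$ and your explicit value $\delta=\sqrt{1-\|T_0\|^2}$.
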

\begin{proof}
	Without loss of generality we assume that $\|T_0\|< 1$. Then it follows that there exists a positive $\delta ~(>0)$ such that $D_{T_0}\geq \delta I > 0$ and $D_{T_0^*}\geq \delta I > 0$. Now, by hypothesis we conclude that 
	\begin{equation*}
		D_T^2-D_{T_0}^2= (T_0-T)^*T_0 + T^*(T_0-T)\quad \text{and}\quad D_{T^*}^2-D_{T_0^*}^2= (T_0-T)T_0^* + T(T_0-T)^*
	\end{equation*}
	are trace class operators and therefore the proof follows by applying Theorem~\ref{thm1} $(ii)$ with $A=D_T$ and $B=D_{T_0}$ ; and  $A=D_{T^*}$ and $B=D_{T_0^*}$ respectively. 
\end{proof}
Now we are in a position to state and prove our main result in this section. For this, we set  $$\mathcal{G}:=\{f(z)=\sum\limits_{k=0}^{\infty}a_kz^k:~~ z\in \mathbb{D}\quad a_k\in \mathbb{C}\quad  \text{and} \quad \sum\limits_{k=0}^{\infty}|ka_k|<\infty\}.$$
\begin{thm}\label{mainthm1}
	Let $T$ and $T_0$ be two contractions on a separable infinite dimensional Hilbert space $\mathcal{H}$ such that $T_0$ is a strict contraction and $T-T_0\in \mathcal{B}_1(\mathcal{H})$. Then there exists a unique (up to an additive constant) real valued function $\xi\in L^1([0,2\pi])$ such that 
	\begin{equation}\label{equ7}
		\textup{Tr}_{\mathcal{H}}~\big\{\phi(T)-\phi(T_0)\big\} = \int_{0}^{2\pi} \frac{d}{dt}\big\{\phi(\textup{e}^{it})\big\}~\xi(t)~dt
	\end{equation}
	for every $\phi$ in $\mathcal{G}$. 
\end{thm}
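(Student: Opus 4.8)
The plan is to reduce the contraction trace formula to the already-known unitary trace formula \eqref{equ} by passing to Sz.-Nagy (Schäffer) unitary dilations on $\mathcal{K}:=\ell^2_{\mathbb{Z}}(\mathcal{H})$. Writing a general element of $\mathcal{K}$ as a bilateral sequence $(x_n)_{n\in\mathbb{Z}}$ with $\mathcal{H}$ identified with the coordinate $\mathcal{H}_0$ at the origin, I would define $U$ to act as the bilateral shift $(x_n)\mapsto(x_{n-1})$ at every coordinate except that the pair $(x_{-1},x_0)$ is sent into $(\mathcal{H}_0,\mathcal{H}_1)$ by the Julia unitary $\begin{pmatrix} D_{T^{*}} & T \\ -T^{*} & D_T \end{pmatrix}$, and define $U_0$ by the same recipe with $T_0,D_{T_0},D_{T_0^{*}}$ in place of $T,D_T,D_{T^{*}}$. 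A direct check (using $T^{*}T+D_T^2=I$, $TT^{*}+D_{T^{*}}^2=I$ and the intertwining relations $TD_T=D_{T^{*}}T$, $T^{*}D_{T^{*}}=D_TT^{*}$) shows each $U$ is unitary and dilates $T$, i.e. $P_{\mathcal{H}}U^{k}|_{\mathcal{H}}=T^{k}$ for $k\ge0$, so that $\phi(T)=P_{\mathcal{H}}\phi(U)|_{\mathcal{H}}$ for every $\phi\in\mathcal{G}$.

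Since the shift part of $U$ is independent of $T$, the difference $U-U_0$ is supported on the single coupling block and equals $\begin{pmatrix} D_{T^{*}}-D_{T_0^{*}} & T-T_0 \\ -(T^{*}-T_0^{*}) & D_T-D_{T_0}\end{pmatrix}$. By hypothesis $T-T_0\in\mathcal{B}_1(\mathcal{H})$, and since $T_0$ is a strict contraction, Corollary~\ref{cor1} gives $D_T-D_{T_0},\,D_{T^{*}}-D_{T_0^{*}}\in\mathcal{B}_1(\mathcal{H})$; hence $U-U_0\in\mathcal{B}_1(\mathcal{K})$, which is exactly the point advertised as \emph{proved rather than assumed}. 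Consequently $U^{k}-U_0^{k}=\sum_{j=0}^{k-1}U^{j}(U-U_0)U_0^{k-1-j}\in\mathcal{B}_1(\mathcal{K})$ with $\|U^{k}-U_0^{k}\|_1\le k\,\|U-U_0\|_1$, and for $\phi\in\mathcal{G}$ the series $\phi(U)-\phi(U_0)=\sum_{k\ge1}a_k(U^{k}-U_0^{k})$ converges in $\mathcal{B}_1(\mathcal{K})$ because $\sum_k k|a_k|<\infty$; the analogous statement holds for $\phi(T)-\phi(T_0)$ in $\mathcal{B}_1(\mathcal{H})$.

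The crucial transfer step is the identity $\textup{Tr}_{\mathcal{K}}\{U^{k}-U_0^{k}\}=\textup{Tr}_{\mathcal{H}}\{T^{k}-T_0^{k}\}$ for every $k\ge0$. I would establish it by computing the diagonal blocks $(U^{k})_{nn}$ as a sum over length-$k$ closed index-paths $n\to n$, where each elementary factor $U_{ij}$ either strictly increases the index (the shift and the three off-origin coupling entries, with $\textup{row}-\textup{col}\in\{1,2\}$) or leaves it unchanged (only the origin entry $U_{00}=T$, with $\textup{row}-\textup{col}=0$). Since the index changes along a closed loop telescope to zero while each is nonnegative, every factor must be index-preserving, so the only surviving diagonal block is $(U^{k})_{00}=T^{k}$, and likewise for $U_0$. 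Taking the trace of the trace-class operator $U^{k}-U_0^{k}$ as the absolutely convergent sum of the traces of its diagonal blocks, only the origin block $T^{k}-T_0^{k}$ survives, which gives the equality; summing against $a_k$ yields $\textup{Tr}_{\mathcal{H}}\{\phi(T)-\phi(T_0)\}=\textup{Tr}_{\mathcal{K}}\{\phi(U)-\phi(U_0)\}$ for all $\phi\in\mathcal{G}$. I expect this diagonal-compression identity to be the main obstacle: one must guarantee that the compression to $\mathcal{H}$ loses no trace, i.e. that the off-origin diagonal blocks of $U^{k}$ and $U_0^{k}$ either vanish or cancel, which is precisely what the index-monotonicity argument secures.

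Finally, since $\phi\in\mathcal{G}$ has $\tfrac{d}{dt}\phi(\textup{e}^{it})=\sum_{k\ge1}ik\,a_k\,\textup{e}^{ikt}$ with $\sum_k k|a_k|<\infty$, the derivative $\phi'$ has an absolutely convergent Fourier series, so the known unitary trace formula \eqref{equ} applies to the pair $(U,U_0)$ and furnishes a real-valued $\xi\in L^1([0,2\pi])$ with $\textup{Tr}_{\mathcal{K}}\{\phi(U)-\phi(U_0)\}=\int_0^{2\pi}\tfrac{d}{dt}\{\phi(\textup{e}^{it})\}\,\xi(t)\,dt$; combining with the transfer identity gives \eqref{equ7}. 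Uniqueness up to an additive constant follows by testing \eqref{equ7} with $\phi(z)=z^{k}$, $k\ge1$: the vanishing of $\int_0^{2\pi}\textup{e}^{ikt}(\xi_1-\xi_2)(t)\,dt$ for all $k\ge1$, together with the reality of $\xi_1-\xi_2$, forces all nonconstant Fourier coefficients of $\xi_1-\xi_2$ to vanish.
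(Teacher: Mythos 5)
Your proposal is correct and follows essentially the same route as the paper's proof: Sch\"affer dilation on $\ell^2_{\mathbb{Z}}(\mathcal{H})$, trace-classness of $U-U_0$ from the single coupling block via Corollary~\ref{cor1}, transfer of the trace to the $(0,0)$ block (your index-monotonicity path argument is exactly the paper's observation that the dilations are triangular with $T$, resp.\ $T_0$, as the only nonzero diagonal entry), Krein's formula for the unitary pair, and uniqueness from the reality of $\xi$ and the vanishing of nonzero Fourier coefficients. No gaps.
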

\begin{proof}
	Using the construction of the Sch$\ddot{a}$ffer matrix unitary dilation of contractions (see \cite{NF}, chapter I, section 5), we dilate $T$ and $T_0$ to the corresponding unitary operators $U_T$ and $U_{T_0}$ respectively on the same Hilbert space $\ell_{\mathbb{Z}}^2(\mathcal{H})$, that is
	\begin{equation*}
		\begin{split}
			T^n=P_{\mathcal{H}} U_T^n|_{\mathcal{H}}\quad \text{and}\quad T_0^n=P_{\mathcal{H}} U_{T_0}^n|_{\mathcal{H}}\quad \text{for}\quad n\geq 1,
		\end{split}
	\end{equation*}
	where $P_{\mathcal{H}}$ is the orthogonal projection of $\ell_{\mathbb{Z}}^2(\mathcal{H})$ onto 
	$\mathcal{H}$ and furthermore the explicit expressions of $U_T$ and $U_{T_0}$ are as follows:
	\begin{align}\label{eqq9}
		\nonumber & U_T\big(\{x_n\}_{n\in \mathbb{Z}}\big) := \\
		& \Big\{\cdots, x_{-2},x_{-1}, D_Tx_0-T^*x_1, \boxed{Tx_0+D_{T^*}x_1},x_2,x_3,\ldots\Big\},\quad \{x_n\}_{n\in \mathbb{Z}} \in \ell_{\mathbb{Z}}^2(\mathcal{H}),
	\end{align}
	and 
	\begin{align}\label{eqq10}
		\nonumber & U_{T_0}\big(\{x_n\}_{n\in \mathbb{Z}}\big) := \\
		&\Big\{\cdots, x_{-2},x_{-1}, D_{T_0}x_0-T_0^*x_1, \boxed{T_0x_0+D_{T_0^*}x_1},x_2,x_3,\ldots\Big\}, \quad \{x_n\}_{n\in \mathbb{Z}} \in \ell_{\mathbb{Z}}^2(\mathcal{H}),
	\end{align} 
	where the boxed entry in the above sequences corresponds to the term indexed by 0 and we identify the Hilbert space $\mathcal{H}$ as a closed subspace of $\ell_{\mathbb{Z}}^2(\mathcal{H})$ consisting of sequences $\{x_n\}_{n\in \mathbb{Z}}$ such that $x_n=0$ for $n\neq 0$. Thus for $\{x_n\}_{n\in \mathbb{Z}} \in \ell_{\mathbb{Z}}^2(\mathcal{H})$ we have 
	\begin{align}\label{eq8}
		\nonumber &\big(U_T-U_{T_0}\big)\big(\{x_n\}_{n\in \mathbb{Z}}\big) :=\\
		& \Big\{\cdots,0,0, ~\left(D_T-D_{T_0}\right)x_0-\left(T^*-T_0^*\right)x_1,~ \boxed{(T-T_0)x_0+\left(D_{T^*}-D_{T_0^*}\right)x_1},~0,0,\cdots\Big\}.
	\end{align}
	In other words, there will be exactly four non-zero entries namely $T-T_0$ at the $(0,0)$ position, $D_{T^*}-D_{T_0^*}$ at the $(0,1)$ position, $D_T-D_{T_0}$ at the $(-1,0)$ position and $-(T^*-T_0^*)$ at the $(-1,1)$ position survive in the block matrix representation of $U_T-U_{T_0}$. On the other hand by hypothesis since $T_0$ is a strict contraction and  $T-T_0$ is trace class, then from the Corollary~\ref{cor1} we conclude that both $D_T-D_{T_0}$ and $D_{T^*}-D_{T_0^*}$ are trace class operators. Therefore it follows from \eqref{eq8} that all non-zero entries in the block matrix representation of $U_T-U_{T_0}$ are trace class and hence $U_T-U_{T_0}$ \emph{is also a trace class operator} in $\ell_{\mathbb{Z}}^2(\mathcal{H})$. This implies that $U_{T}^n-U_{T_0}^n\in \mathcal{B}_1(\ell_{\mathbb{Z}}^2(\mathcal{H}))$ for all $n\geq 1$. Moreover, it is also clear from equations \eqref{eqq9} and \eqref{eqq10} that both $U_T$ and $U_{T_0}$ are upper triangular matrices with the only non-zero diagonal entries $T$ and $T_0$ and hence both $U_T^n$ and $U_{T_0}^n$ are also upper triangular matrices with the only non-zero diagonal entries $T^n$ and $T_0^n$ respectively for any $n\geq 1$. Thus
	\begin{equation*}
		T^n-T_0^n = P_{\mathcal{H}}\{U_T^n-U_{T_0}^n\}|_{\mathcal{H}}, \quad \text{and}\quad \textup{Tr}_{\mathcal{H}}~\big\{T^n-T_0^n\big\} =   \textup{Tr}_{\ell_{\mathbb{Z}}^2(\mathcal{H})}~\big\{U_{T}^n-U_{T_0}^n\big\},\quad n\geq 0,
	\end{equation*}
	and hence
	\begin{equation}\label{eq26}
		\begin{split}
			& p(T)-p(T_0) = P_{\mathcal{H}}\{p(U_T)-p(U_{T_0})\}|_{\mathcal{H}}, \quad \text{and}\\
			&  \textup{Tr}_{\mathcal{H}}~\big\{p(T)-p(T_0)\big\} =   \textup{Tr}_{\ell_{\mathbb{Z}}^2(\mathcal{H})}~\big\{p(U_{T})-p(U_{T_0})\big\}
		\end{split}
	\end{equation}
	for any polynomial $p(\cdot)$ in $\mathbb{D}$. Next for $\phi(z)=\sum\limits_{k=0}^{\infty}a_kz^k\in\mathcal{G}$, and if we denote $p_n(z)=\sum\limits_{k=0}^{n}a_kz^k$, then it is easy to check that $p_n(T)$, $p_n(T_0)$ and $p_n(U_T)$, $p_n(U_{T_0})$ converge to $\phi(T)$, $\phi(T_0)$ and $\phi(U_T)$, $\phi(U_{T_0})$ respectively in operator norm. Therefore from the above equation \eqref{eq26} we conclude that 
	\begin{equation}\label{eq27}
		\phi(T)-\phi(T_0) = P_{\mathcal{H}}\{\phi(U_T)-\phi(U_{T_0})\}|_{\mathcal{H}}.
	\end{equation}
	Furthermore, since $\sum\limits_{k=0}^{\infty}|ka_k|<\infty$ as $\phi\in \mathcal{G}$ and $U_T-U_{T_0}\in \mathcal{B}_1(\ell_{\mathbb{Z}}^2(\mathcal{H})),$ it follows that
	\begin{equation}\label{eq34}
		\begin{split}
			&\Big\|\big\{\phi(U_T)-\phi(U_{T_0})\big\} - \big\{p_n(U_T)-p_n(U_{T_0})\big\}\Big\|_1\\
			& \leq \left\|\sum_{k=n+1}^{\infty}a_k\big\{U_T^{k}-U_{T_0}^k\big\}\right\|_1 = \left\|\sum_{k=n+1}^{\infty}a_k \Big\{\sum_{j=0}^{k-1}U_T^{k-j-1}(U_T-U_{T_0})U_{T_0}^j\Big\}\right\|_1\\
			& \leq \|U_T-U_{T_0}\|_1 \sum_{k=n+1}^{\infty}|ka_k|
			\longrightarrow 0 \quad \text{as} \quad n\longrightarrow \infty.
		\end{split}
	\end{equation}
	Thus $\big\{\phi(U_T)-\phi(U_{T_0})\big\}\in \mathcal{B}_1(\ell_{\mathbb{Z}}^2(\mathcal{H}))$ and from \eqref{eq27} we conclude that $\big\{\phi(T)-\phi(T_0)\big\}$ is a trace class operator and
	\begin{equation}\label{eq9}
		\text{Tr}_{\mathcal{H}}~\big\{\phi(T)-\phi(T_0)\big\}
		= \text{Tr}_{\ell_{\mathbb{Z}}^2(\mathcal{H})}~\Big\{P_{\mathcal{H}}\{\phi(U_T)-\phi(U_{T_0})\}P_{\mathcal{H}}\Big\}.
	\end{equation}
Furthermore, since $\big\{\phi(U_T)-\phi(U_{T_0})\big\}\in \mathcal{B}_1(\ell_{\mathbb{Z}}^2(\mathcal{H}))$, and since all the diagonal entries of  $P_{\mathcal{H}}^{\perp}\big\{\phi(U_T)-\phi(U_{T_0})\big\}P_{\mathcal{H}}^{\perp}$ in the  Sch$\ddot{a}$ffer-dilation basis are zero, it follows that 
$$\text{Tr}_{\ell_{\mathbb{Z}}^2(\mathcal{H})}~\Big\{P_{\mathcal{H}}^{\perp}\{\phi(U_T)-\phi(U_{T_0})\}P_{\mathcal{H}}^{\perp}\Big\}=0$$ and hence we get
	\begin{equation}\label{eq13}
		\text{Tr}_{\mathcal{H}}~\big\{\phi(T)-\phi(T_0)\big\}
		= \textup{Tr}_{\ell_{\mathbb{Z}}^2(\mathcal{H})}~\big\{\phi(U_{T})-\phi(U_{T_0})\big\}.
	\end{equation} 
	Note that $U_T$ and $U_{T_0}$ are two unitary operators on
	$l_{\mathbb{Z}}^2(\mathcal{H})$ such that $U_T-U_{T_0}\in \mathcal{B}_1(\ell_{\mathbb{Z}}^2(\mathcal{H}))$ and therefore by Krein's trace formula corresponding to the pair $(U_T,U_{T_0})$ \cite{AP1, K1, MNV3, SM1} there exists 
	a real-valued $L^1([0,2\pi])$-function $\xi$ (known as the SSF, corresponding to the pair $(U_T,U_{T_0})$) such that
	\begin{equation} \label{eq28}
		\text{Tr}_{\ell_{\mathbb{Z}}^2(\mathcal{H})}~\big\{\phi(U_T)-\phi(U_{T_0})\big\} = \int_{0}^{2\pi} \frac{d}{dt}\big\{\phi(\textup{e}^{it})\big\}~\xi (t)~dt
	\end{equation}
	for every $\phi$ in $\mathcal{G}$.
	Finally, combining \eqref{eq13} and \eqref{eq28} we arrive at: 
	\begin{equation}\label{eqr29}
		\begin{split}
			\text{Tr}_{\mathcal{H}}~\big\{\phi(T)-\phi(T_0)\big\}
			= \int_{0}^{2\pi} \frac{d}{dt}\big\{\phi(\textup{e}^{it})\big\}~\xi(t)~dt
		\end{split}
	\end{equation}
	for every $\phi$ in $\mathcal{G}$. Since $\xi\in L^1(\mathbb{T})$, real-valued, it follows that if there is another such $\eta$ such that \eqref{eqr29} is satisfied with $\eta$ replacing $\xi$, then this will imply that
	\begin{equation*}
		\int_0^{2\pi} \frac{d}{dt}\big\{\phi(\textup{e}^{it})\big\}~\big[\xi(t)-\eta(t)\big]dt =0\quad \text{for every} \quad \phi \in \mathcal{G},
	\end{equation*}
in particular
\begin{equation*}
 	\int_0^{2\pi} in~\textup{e}^{int}~\big[\xi(t)-\eta(t)\big]dt =0\quad \forall \quad n\geq 0.
\end{equation*}
Taking the complex-conjugate of this relation, we arrive at 
\begin{equation*}
		\int_0^{2\pi} \textup{e}^{int}~\big[\xi(t)-\eta(t)\big]dt =0\quad \forall \quad n\in \mathbb{Z}\setminus \{0\}.
\end{equation*}
Since $\xi-\eta\in L^1(\mathbb{T})$, this would imply that $\xi-\eta=constant$ or that $\xi$ is unique modulo an additive constant. This completes the proof. 
\end{proof}

\begin{rmrk}
	$(i)$~The SSF $\xi$ for a pair $(U,U_0)$ of unitaries is real-valued $L^1(\mathbb{T})$-function, by its construction (\cite{K,MNV3,SM1}). However, while one can write 
	\begin{equation*}
		\textup{Tr}~\big\{U^n-U_0^n\big\}= 	\int_0^{2\pi} in~\textup{e}^{int} \xi(t) dt\quad \text{for all}\quad n\in \mathbb{Z}
	\end{equation*}
one can not, in general, do the same for $\big\{T^n-T_0^n\big\}$ for $n$ a negative integer because $T$ (or $T_0$) need not be invertible. On the other hand, the same purpose will be served if one takes the adjoint instead and note that: if $\|T\|\leq 1$, ~$\|T_0\|<1$ and $T-T_0\in \mathcal{B}_1(\mathcal{H})$, the same is true for the pair $(T^*,T_0^*)$ replacing the pair $(T,T_0)$. This leads to (recalling that $\xi$ is real-valued)
\begin{equation}\label{eq32}
	\begin{split}
		& \textup{Tr}~\big\{T^{*^n}-T_0^{*^n}\big\}  =\overline{\textup{Tr}~\big\{T^{^{n}}-{T_0}^{n}\big\}}=\overline{ \int_{0}^{2\pi} in~\textup{e}^{int}~\xi (t)~dt}=-in \int_{0}^{2\pi} \textup{e}^{-int}~\xi (t)~dt,
	\end{split}
\end{equation}
for $n\in \mathbb{N}$, though the functions $\big\{\textup{e}^{-int}|~n\in \mathbb{N}\big\}$ do not belong to $\mathcal{G}$. We shall exploit this simple observation in the next section to construct a different kind of trace formula for the difference of functions of $(T,T^*)$ and of $(T_0,T_0^*)$ to obtain an integral expression with support on $\mathbb{D}$ instead of on $\mathbb{T}$.
\vspace{0.1in}

$(ii)$~As we have observed earlier that, for $n\in \mathbb{N}$
\begin{equation*}
		T^{*^n}-T_0^{*^n} = P_{\mathcal{H}}\{U_T^{*^n}-U_{T_0}^{*^n}\}|_{\mathcal{H}} = P_{\mathcal{H}}\{U_T^{-n}-U_{T_0}^{-n}\}|_{\mathcal{H}}.
\end{equation*}
 But instead one could have dilated the pair $(T^*,T_0^*)$ of contractions to obtain for $n\in \mathbb{N}$, 
 	$T^{*^n}=P_{\mathcal{H}} U_{T^*}^{n}|_{\mathcal{H}}$, $T_0^{*^n}=P_{\mathcal{H}} U_{T_0^*}^{n}|_{\mathcal{H}}$.
Thus though 
\begin{equation*}
	\textup{Tr}_{\mathcal{H}}~\big\{T^{*^n}-T_0^{*^n}\big\} = \textup{Tr}_{\ell^2_{\mathbb{Z}}(\mathcal{H})}~\big\{U_T^*{^{^n}}-U_{T_0}^{*^{n}}\big\}  
	 =-in \int_{0}^{2\pi} \textup{e}^{-int}~\xi (t)~dt \quad \text{for} \quad n\in \mathbb{N},
\end{equation*}
it is also equal to 
\begin{equation*}
	\textup{Tr}_{\ell^2_{\mathbb{Z}}(\mathcal{H})}~\big\{U_{T^*}^{n}-U_{T_0^*}^{n}\big\}  
	=in \int_{0}^{2\pi} \textup{e}^{int}~\chi (t)~dt,
\end{equation*}
with both $\xi$ and $\chi$ are real-valued $L^1(\mathbb{T})$-functions. It may be noted that while $\big\{U_{T^*}-U_{T_0^*}\big\}$ is an upper triangular matrix-operator, $\big(U_T^*-U_{T_0}^*\big)=\big(U_T-U_{T_0}\big)^*$ is a lower-triangular one. Thus, we do not expect the two SSF's $\xi$ and $\chi$ to be related. However, it is easy to see that:
\begin{equation*}
	-in \int_{0}^{2\pi} \textup{e}^{-int}~\xi (t)~dt = in \int_{0}^{2\pi} \textup{e}^{int}~\chi (t)~dt
\end{equation*}
or $\hat{\chi}(n)=-\hat{\xi}(-n)$ for each $n\in \mathbb{N}\setminus \{0\}$ or equivalently $\chi(t) +\xi(-t)= constant$.  
\end{rmrk}

\section{Trace formula with support on the disc $\mathbb{D}$}
The SSF $\xi$ in \eqref{eqsec1} corresponding to a pair of self-adjoint operators $(H,H_0)$ such that $H-H_0\in \mathcal{B}_1(\mathcal{H})$ is supported on a subset of $\mathbb{R}$ which contains the spectrum of $H$ as well as the spectrum of $H_0$. The same is true for SSF $\eta$ in \eqref{equ} corresponding to a pair of unitary operators $(U,U_0)$ with trace class difference, that is, the support of $\eta$ lies in $\mathbb{T}$ and contain the spectrum of $U$ and $U_0$. Therefore it is expected while dealing with a pair of contractions $(T,T_0)$ with $T-T_0\in \mathcal{B}_1(\mathcal{H})$, the shift function corresponding to that pair $(T,T_0)$ should also be supported on a subset of the closed unit disc $\overline{\mathbb{D}}$ which contains the spectrum of $T$ and $T_0$. But in \eqref{equ7} we see that this is not the case, that is, the shift function  $\xi$ corresponding to the pair $(T,T_0)$ is supported on the unit circle $\mathbb{T}$ whereas the spectrum of $T$ and $T_0$ is contained in $\overline{\mathbb{D}}$. Our next part of the analysis is devoted to obtaining an appropriate justification of the fact mentioned above.
Next observe that the Lebesgue measure in $\mathbb{C}$, restricted to $\mathbb{T}$ is zero and therefore, to obtain a shift function supported on $\overline{\mathbb{D}}$ and satisfying a formula like  \eqref{equ7} corresponding to a pair of contractions $(T,T_0)$, it is necessary to consider functions of the operator along with its adjoint instead of that of the operator alone. To obtain an extension of $L^1(\mathbb{T})$-function as a harmonic function into the interior $\mathbb{D}$ of $\mathbb{T}$, we next use the the Poisson integral representation. Let $f\in L^1(\mathbb{T})$, then the Poisson integral of $f$ is denoted by $Pf$ and is defined by
\begin{equation}\label{eq14}
	(Pf)(z) =  \frac{1}{2\pi}\int_0^{2\pi}\frac{1-|z|^2}{\left|\textup{e}^{it}-z\right|^2}~f(\textup{e}^{it})~dt,
\end{equation}
where $z\in \mathbb{D}$ and $\frac{dt}{2\pi}$ is the normalized Lebesgue measure on $\mathbb{T}$. Now one can verify that for $z\in \mathbb{D}$,
\begin{equation}\label{eq33}
	\frac{1-|z|^2}{\left|\textup{e}^{it}-z\right|^2}
	= 1 + \sum_{n=1}^{\infty} \bar{z}^n\textup{e}^{int}+\sum_{n=1}^{\infty} z^n\textup{e}^{-int},
\end{equation} 
and therefore combining equations \eqref{eq14} and \eqref{eq33} we conclude that $Pf$ is a harmonic function on $\mathbb{D}$ and that
\begin{equation}\label{eq16}
	(Pf)(z) =  \hat{f}(0) + \sum_{n=1}^{\infty} \hat{f}(-n)\bar{z}^n+\sum_{n=1}^{\infty} \hat{f}(n) z^n,\quad \text{for}\quad z\in \mathbb{D},
\end{equation}
where $\hat{f}(n)$ is the $n$-the Fourier coefficient of $f$ given by 
\[
\hat{f}(n) = \frac{1}{2\pi}\int_0^{2\pi} f(\textup{e}^{it})~\textup{e}^{-int}~dt, \quad \text{where} \quad n\in \mathbb{Z}.
\]
In other words, through Poisson integral transform, we extend an $L^1(\mathbb{T})$- function defined on the boundary to the interior of $\mathbb{T}$, that is on $\mathbb{D}$ as a harmonic function.    One of the oldest results about the boundary behavior of $Pf$ is due to Fatou, and it says that the original function $f$ is retrieved almost everywhere as a boundary value of $Pf$.
\begin{thm}
	Let  $f\in L^1(\mathbb{T})$, then 
	\[
	\lim_{r\longrightarrow 1-}(Pf)(r\textup{e}^{it})= f(\textup{e}^{it})
	\]
	for all $\textup{e}^{it}\in \mathbb{T}$ except possibly on a set of  measure zero.
\end{thm}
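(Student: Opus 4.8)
The plan is to recognise $(Pf)(re^{i\theta})$ as a convolution on $\mathbb{T}$ against the Poisson kernel and to exploit that this kernel is an approximate identity whose radial maximal operator is dominated by the Hardy--Littlewood maximal function. Writing $z=re^{i\theta}$ and
$$P_r(\theta):=\frac{1-r^2}{1-2r\cos\theta+r^2}=\frac{1-r^2}{|e^{i\theta}-r|^2},$$
formula \eqref{eq14} becomes $(Pf)(re^{i\theta})=\frac{1}{2\pi}\int_0^{2\pi}P_r(\theta-t)\,f(e^{it})\,dt$. First I would record the three standard properties that follow from \eqref{eq33}: positivity $P_r\ge 0$, the normalisation $\frac{1}{2\pi}\int_0^{2\pi}P_r(\theta)\,d\theta=1$ (integrate \eqref{eq33} term by term, only the constant term survives), and the concentration $\sup_{\delta\le|\theta|\le\pi}P_r(\theta)\to 0$ as $r\to 1^-$ for each fixed $\delta>0$. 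Together these say that $\{P_r\}$ is an approximate identity as $r\to 1^-$.

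\textbf{Dense class.} For $g\in C(\mathbb{T})$ the approximate-identity properties give $(Pg)(re^{i\theta})\to g(e^{i\theta})$ uniformly in $\theta$ as $r\to 1^-$: one splits the convolution at $|\theta-t|=\delta$, using uniform continuity of $g$ on the near part and the concentration estimate together with $\|g\|_\infty$ on the far part. In particular the conclusion of the theorem holds at \emph{every} point when $f$ is continuous.

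\textbf{Upgrade to $L^1$.} For general $f$ uniform convergence fails, so I would control the radial maximal function $(P^\ast f)(\theta):=\sup_{0\le r<1}|(Pf)(re^{i\theta})|$ by the Hardy--Littlewood maximal function $(Mf)(\theta)$, namely $P^\ast f\le C\,Mf$ pointwise, and then invoke the weak-type $(1,1)$ bound $|\{Mf>\lambda\}|\le C\lambda^{-1}\|f\|_{L^1(\mathbb{T})}$. Given $\varepsilon>0$, choose $g\in C(\mathbb{T})$ with $\|f-g\|_{L^1(\mathbb{T})}<\varepsilon$ and set $h=f-g$. Since the operator $\Omega f(\theta):=\limsup_{r\to 1^-}|(Pf)(re^{i\theta})-f(e^{i\theta})|$ is subadditive and vanishes identically on the continuous part by the previous step, one has $\Omega f=\Omega h\le P^\ast h+|h|$ almost everywhere. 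Hence for every $\lambda>0$,
$$|\{\Omega f>\lambda\}|\le|\{P^\ast h>\lambda/2\}|+|\{|h|>\lambda/2\}|\le\frac{2C}{\lambda}\|h\|_{L^1}+\frac{2}{\lambda}\|h\|_{L^1}\le\frac{C'\varepsilon}{\lambda}.$$
Letting $\varepsilon\to 0$ forces $|\{\Omega f>\lambda\}|=0$ for each $\lambda>0$, so $\Omega f=0$ almost everywhere, which is precisely the asserted convergence off a null set.

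\textbf{The main obstacle} is the pointwise domination $P^\ast f\le C\,Mf$ used above. This rests on the fact that, for fixed $r$, the map $\theta\mapsto P_r(\theta)$ is even and nonincreasing on $[0,\pi]$ with the explicit comparison $P_r(\theta)\asymp\frac{1-r}{(1-r)^2+\theta^2}$; a radially decreasing integrable majorant of uniformly bounded mass controls its convolution maximal operator by $Mf$ through a summation-by-parts (layer-cake) decomposition of the kernel into averages over symmetric intervals centred at $\theta$. Concretely, I would write $P_r=\int_0^\infty\mathbf{1}_{\{P_r>s\}}\,ds$ as a superposition of indicators of such intervals and bound each layer's contribution by $Mf(\theta)$ times the measure of that layer. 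The remaining ingredients --- the weak-$(1,1)$ theory of $Mf$ and the uniform-convergence estimate for continuous data --- are entirely standard, so this monotone-majorant estimate is the only genuinely technical point.
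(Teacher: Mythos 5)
The paper does not actually prove this statement: it is quoted as Fatou's classical theorem and delegated to the cited references (Duren, Rudin), so there is no in-paper argument to compare against. Your proof is correct and is the standard one those references use: identify $(Pf)(re^{i\theta})$ as convolution with the approximate identity $P_r$, get everywhere-convergence on the dense class $C(\mathbb{T})$, dominate the radial maximal operator by the Hardy--Littlewood maximal function via the radially decreasing integrable majorant $P_r(\theta)\asymp (1-r)\big((1-r)^2+\theta^2\bigr)^{-1}$ and the layer-cake decomposition, and close with the weak-$(1,1)$ inequality and the $\limsup$-operator $\Omega$. All the steps you flag as routine are indeed routine, and the one you single out as the technical crux (the pointwise bound $P^\ast f\le C\,Mf$) is exactly where the work lies; your sketch of it is sound. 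The only cosmetic remark is that the textbook versions (e.g.\ Rudin, Theorem 11.23) give the slightly stronger nontangential convergence at almost every point by the same mechanism, whereas you only claim the radial limit, which is all the paper's statement requires.
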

For more on the Poisson integral and related matter see \cite{DU, Rudin} and the references therein. Note that we prove our main theorem (see Theorem~\ref{mainthmthirdsec}) in this section for the following class of functions
\begin{equation*}
	\widetilde{\mathcal{G}}:=\Big\{\psi:\mathbb{T}\longrightarrow \mathbb{C}~\Big|~\sum_{n=-\infty}^{\infty}|n\hat{\psi}(n)|<\infty, \quad \text{where} \quad \hat{\psi}(n) = \frac{1}{2\pi}\int_0^{2\pi} \psi(\textup{e}^{it})~\textup{e}^{-int}~dt, \quad n\in \mathbb{Z}\Big\}.
\end{equation*}
Let $\psi \in \widetilde{\mathcal{G}}$ and set for $z\in \overline{\mathbb{D}}=\mathbb{D}\cup \mathbb{T}$,
\begin{equation}\label{eq15}
	\widetilde{\psi}(z,\bar{z})
	= \hat{\psi}(0) + \sum\limits_{n=1}^{\infty} \hat{\psi}(-n)\bar{z}^n+\sum\limits_{n=1}^{\infty} \hat{\psi}(n) z^n.
\end{equation}
Now it is important to observe that the Poisson integral transform of $\psi$, that is $P\psi$ may not exists for $z\in\mathbb{T}$ but
the right hand side of \eqref{eq15} makes sense for $z\in \mathbb{T}$ because of the extra assumption on $\psi$, namely $\sum\limits_{n=-\infty}^{\infty}|n\hat{\psi}(n)|<\infty.$ This implies that  $\sum\limits_{n=-\infty}^{\infty}|\hat{\psi}(n)|<\infty$ and furthermore the equation \eqref{eq16} implies that the extension \eqref{eq15} of $\psi$ to the interior $\mathbb{D}$ of $\mathbb{T}$ is same as  the Poisson integral transform of $\psi$, that is $\widetilde{\psi}(z,\bar{z})=(P\psi)(z)$ for $z\in \mathbb{D}$. Next we need the following useful lemma towards obtaining our main result in this section. 
\begin{lma}\label{finallemma}
	Let $f:\mathbb{T}\longrightarrow \mathbb{C}$ be such that $\sum\limits_{n=-\infty}^{\infty}|\hat{f}(n)|<\infty$, and let $g\in L^1([0,2\pi])$. Then 
	\begin{equation*}
		\frac{1}{2\pi}\int_{0}^{2\pi} f(\textup{e}^{it})~g(t)~dt = \sum_{n=-\infty}^{\infty}\hat{f}(n)~\hat{g}(-n).
	\end{equation*}
\end{lma}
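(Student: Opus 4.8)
The plan is to replace $f$ by its (absolutely convergent) Fourier series, interchange summation and integration against $g$, and then recognize each resulting integral as a Fourier coefficient of $g$.

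First I would observe that since $\sum_{n}|\hat{f}(n)|<\infty$, the symmetric partial sums $S_N(t):=\sum_{|n|\leq N}\hat{f}(n)\,\textup{e}^{int}$ converge uniformly on $[0,2\pi]$ to a continuous function. Computing Fourier coefficients term by term (legitimate under uniform convergence) shows that this limit has exactly the same Fourier coefficients as $f$, so by the uniqueness theorem for Fourier coefficients of $L^1(\mathbb{T})$-functions it agrees with $f(\textup{e}^{it})$ for almost every $t$. In particular $S_N(t)\,g(t)\longrightarrow f(\textup{e}^{it})\,g(t)$ pointwise almost everywhere on $[0,2\pi]$.

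Next I would pass the limit inside the integral. Setting $M:=\sum_{n}|\hat{f}(n)|<\infty$, one has the uniform bound $|S_N(t)|\leq M$ for all $N$ and $t$, whence $|S_N(t)\,g(t)|\leq M\,|g(t)|\in L^1([0,2\pi])$ since $g\in L^1([0,2\pi])$. The dominated convergence theorem therefore yields
\begin{equation*}
	\frac{1}{2\pi}\int_{0}^{2\pi} f(\textup{e}^{it})\,g(t)\,dt
	= \lim_{N\to\infty}\sum_{|n|\leq N}\hat{f}(n)\,\Big(\frac{1}{2\pi}\int_{0}^{2\pi}\textup{e}^{int}\,g(t)\,dt\Big).
\end{equation*}
Finally, by the definition of the Fourier coefficient one has $\frac{1}{2\pi}\int_{0}^{2\pi}\textup{e}^{int}\,g(t)\,dt=\hat{g}(-n)$, and since $|\hat{g}(-n)|\leq \frac{1}{2\pi}\int_{0}^{2\pi}|g(t)|\,dt$ is bounded while $\sum_{n}|\hat{f}(n)|<\infty$, the series $\sum_{n}\hat{f}(n)\,\hat{g}(-n)$ converges absolutely and equals the left-hand side, which is the asserted identity.

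The only genuinely delicate point—and the step I expect to require the most care—is the justification for representing $f$ by its pointwise almost-everywhere Fourier series and then commuting the infinite sum with the integral against the $L^1$ weight $g$. This is precisely where the hypothesis $\sum_{n}|\hat{f}(n)|<\infty$ (rather than mere membership in $L^1$ or $L^2$) is essential: it supplies both the uniform convergence needed to identify the limit with $f$ and the uniform bound $M$ required to apply dominated convergence. Everything else reduces to a routine recognition of Fourier coefficients.
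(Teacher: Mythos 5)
Your proposal is correct and follows essentially the same route as the paper: expand $f$ in its absolutely (hence uniformly) convergent Fourier series, interchange the sum with the integral against $g$, and identify $\frac{1}{2\pi}\int_0^{2\pi}\textup{e}^{int}g(t)\,dt$ as $\hat{g}(-n)$. The only cosmetic difference is that you justify the interchange via dominated convergence with the bound $M|g(t)|$ (and are slightly more careful in identifying $f$ a.e.\ with its Fourier series via the uniqueness theorem), whereas the paper invokes Fubini's theorem; both are standard and equivalent here.
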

\begin{proof}
	Since $\sum\limits_{n=-\infty}^{\infty}|\hat{f}(n)|<\infty$, then it is easy to verify that $f$ is a bounded continuous function on $\mathbb{T}$ and furthermore we have that
	\begin{equation}\label{eq17}
		f(\textup{e}^{it}) = \sum_{n=-\infty}^{\infty} \hat{f}(n)~\textup{e}^{int},
	\end{equation}
	where the series in the right hand side converges uniformly on $\mathbb{T}$. Thus using the above expression  \eqref{eq17} of $f$ we conclude that
	\begin{equation*}
		\begin{split}
			\frac{1}{2\pi}\int_{0}^{2\pi} f(\textup{e}^{it})~g(t)~dt & = \frac{1}{2\pi}\int_{0}^{2\pi}\left(\sum_{n=-\infty}^{\infty} \hat{f}(n)~\textup{e}^{int}\right)~~g(t)~dt\\
			& = \sum_{n=-\infty}^{\infty} \hat{f}(n)~\left(\frac{1}{2\pi}\int_{0}^{2\pi} g(t)~\textup{e}^{int}~dt\right) = \sum_{n=-\infty}^{\infty}\hat{f}(n)~\hat{g}(-n),
		\end{split}
	\end{equation*}
	where we have applied Fubini's theorem to get the second equality since $\sum\limits_{n=-\infty}^{\infty}|\hat{f}(n)|<\infty$ and $g\in L^1([0,2\pi])$. This completes the proof. 
\end{proof}
Suppose $T\in \mathcal{B}(\mathcal{H})$ be such that $\|T\|\leq 1$ and using the representation \eqref{eq15} of $\psi\in \widetilde{\mathcal{G}}$ we set 
\begin{equation*}
	\widetilde{\psi}(T,T^*)= \hat{\psi}(0)I + \sum\limits_{n=1}^{\infty} \hat{\psi}(-n)~T^{*^n}+\sum\limits_{n=1}^{\infty} \hat{\psi}(n)~T^{^n},
\end{equation*}
where the right hand side converges in operator norm since $\sum\limits_{n=-\infty}^{\infty}|\hat{\psi}(n)|<\infty$. Now we are in a position to state and prove our main result in this section for a class of function $\widetilde{\mathcal{G}}$. 
\begin{thm} \label{mainthmthirdsec}
	Let $T$ and $T_0$ be two contractions on a separable infinite dimensional Hilbert space $\mathcal{H}$ such that $T_0$ is a strict contraction and $T-T_0\in \mathcal{B}_1(\mathcal{H})$. Then for $\psi\in\widetilde{\mathcal{G}}$, the operator $\widetilde{\psi}(T,T^*)-\widetilde{\psi}(T_0,T_0^*)$ is trace class and
	\begin{equation}\label{eq25}
		\textup{Tr}_{\mathcal{H}}~\Big\{\widetilde{\psi}(T,T^*)-\widetilde{\psi}(T_0,T_0^*)\Big\}
		= \int_{\mathbb{D}}J\big(\widetilde{\xi},\widetilde{\psi}\big)(z,\bar{z})~dz\wedge d\bar{z},
	\end{equation}
	where $\widetilde{\psi}$ is as in \eqref{eq15}, $\xi$ is the shift function corresponding to the pair $(T,T_0)$ as in \eqref{equ7} and $\widetilde{\xi}(z,\bar{z})=(P\xi)(z)$ for $z\in \mathbb{D}$,  $J\big(\widetilde{\xi},\widetilde{\psi}\big) = \frac{\partial  \widetilde{\xi}}{\partial z}\frac{\partial  \widetilde{\psi}}{\partial \bar{z}} -\frac{\partial  \widetilde{\psi}}{\partial z}\frac{\partial  \widetilde{\xi}}{\partial \bar{z}}$ is the Jacobian of $\widetilde{\xi}$ and $\widetilde{\psi}$ on $\mathbb{D}$,
	$dz\wedge d\bar{z}$ is the Lebesgue measure on $\mathbb{D}$ and the integral in the right hand side of \eqref{eq25} is to be interpreted as an improper Riemann-Lebesgue integral
	$$\lim\limits_{R\uparrow 1} \int\limits_{\{z||z|\leq R<1\}}J\big(\widetilde{\xi},\widetilde{\psi}\big)(z,\bar{z})~dz\wedge d\bar{z}.$$	 
\end{thm}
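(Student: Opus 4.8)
The plan is to reduce the statement to the scalar trace formula of Theorem~\ref{mainthm1} by expanding $\widetilde\psi$ monomial-by-monomial, to evaluate the resulting trace as a bilinear sum in the Fourier coefficients of $\xi$ and $\psi$, and then to show by a direct computation in polar coordinates that the disc integral on the right of \eqref{eq25} produces the same sum. First I would write
\begin{equation*}
	\widetilde\psi(T,T^*)-\widetilde\psi(T_0,T_0^*)=\sum_{n=1}^\infty\hat\psi(n)\,(T^n-T_0^n)+\sum_{n=1}^\infty\hat\psi(-n)\,(T^{*n}-T_0^{*n}),
\end{equation*}
and estimate each summand in trace norm. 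From $T^n-T_0^n=P_{\mathcal H}(U_T^n-U_{T_0}^n)|_{\mathcal H}$, the telescoping identity used in \eqref{eq34}, and unitarity of $U_T,U_{T_0}$, one has $\|T^n-T_0^n\|_1\le\|U_T^n-U_{T_0}^n\|_1\le n\,\|U_T-U_{T_0}\|_1$, and the same bound for $T^{*n}-T_0^{*n}$ by taking adjoints. Since $\psi\in\widetilde{\mathcal G}$ gives $\sum_{n\ne0}|n\hat\psi(n)|<\infty$, both series converge in $\mathcal B_1(\mathcal H)$; hence the difference is trace class and the trace may be taken term by term.

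Next I would evaluate the trace side. Taking $\phi(z)=z^n\in\mathcal G$ in Theorem~\ref{mainthm1} gives $\textup{Tr}(T^n-T_0^n)=\int_0^{2\pi}in\,\textup{e}^{int}\xi(t)\,dt=2\pi i\,n\,\hat\xi(-n)$, while \eqref{eq32} gives $\textup{Tr}(T^{*n}-T_0^{*n})=-in\int_0^{2\pi}\textup{e}^{-int}\xi(t)\,dt=-2\pi i\,n\,\hat\xi(n)$. Summing against $\hat\psi(\pm n)$ yields
\begin{equation*}
	\textup{Tr}_{\mathcal H}\big\{\widetilde\psi(T,T^*)-\widetilde\psi(T_0,T_0^*)\big\}=2\pi i\sum_{n=1}^\infty n\big[\hat\psi(n)\hat\xi(-n)-\hat\psi(-n)\hat\xi(n)\big].
\end{equation*}
For the disc side I would compute the Wirtinger derivatives from \eqref{eq15}, e.g. $\partial_z\widetilde\xi=\sum_{m\ge1}m\hat\xi(m)z^{m-1}$ and $\partial_{\bar z}\widetilde\psi=\sum_{n\ge1}n\hat\psi(-n)\bar z^{n-1}$, which converge uniformly on each $\{|z|\le R<1\}$ because the $\hat\xi(\pm m)$ are bounded and $\sum n|\hat\psi(n)|<\infty$. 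Thus $J(\widetilde\xi,\widetilde\psi)$ is there a uniformly convergent series of monomials $z^a\bar z^b$; passing to polar coordinates and using $\int_0^{2\pi}\textup{e}^{i(a-b)\theta}\,d\theta=2\pi\delta_{a,b}$ annihilates every off-diagonal term, leaving only $m=n$, and with $dz\wedge d\bar z=-2i\,dx\,dy$ one obtains
\begin{equation*}
	\int_{\{|z|\le R\}}J(\widetilde\xi,\widetilde\psi)\,dz\wedge d\bar z=2\pi i\sum_{n=1}^\infty n\,R^{2n}\big[\hat\psi(n)\hat\xi(-n)-\hat\psi(-n)\hat\xi(n)\big].
\end{equation*}
Letting $R\uparrow1$ and comparing with the previous display completes the proof.

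The hard part is the limit $R\uparrow1$, which cannot be avoided since $\widetilde\xi$ is only the Poisson extension of an $L^1$ function, so $\partial_z\widetilde\xi$ may fail to be integrable up to $\mathbb T$; this is exactly why the integral in \eqref{eq25} is declared improper. I would justify the passage to the limit by absolute convergence of the coefficient series: since $|\hat\xi(\pm n)|\le\|\xi\|_{L^1}/2\pi$ and $\sum_n n|\hat\psi(\pm n)|<\infty$, the series in the last display is dominated uniformly in $R\in[0,1]$ by a convergent one, so Abel's theorem lets me take the limit term by term and recover the sum of the trace. Conceptually the identity is Green's theorem: writing $J(\widetilde\xi,\widetilde\psi)\,dz\wedge d\bar z=d(\widetilde\xi\,d\widetilde\psi)$ one has formally $\int_{\mathbb D}J(\widetilde\xi,\widetilde\psi)\,dz\wedge d\bar z=\int_{\mathbb T}\xi\,d\psi=\int_0^{2\pi}\tfrac{d}{dt}\{\psi(\textup{e}^{it})\}\,\xi(t)\,dt$, and the latter equals the trace by Lemma~\ref{finallemma} applied with $f=\tfrac{d}{dt}\psi(\textup{e}^{it})$ (whose Fourier coefficients are $in\hat\psi(n)$) and $g=\xi$. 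This both accounts for the Helton--Howe resemblance and explains why the rigorous argument must route through the regularized Fourier computation rather than a naive use of Stokes' theorem, $\widetilde\xi$ being merely harmonic with $L^1$ boundary data. A minor point to keep fixed throughout is the normalization $dz\wedge d\bar z=-2i\,dx\,dy$, needed for the constants to agree.
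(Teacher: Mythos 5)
Your proposal is correct and follows essentially the same route as the paper: expand $\widetilde\psi(T,T^*)-\widetilde\psi(T_0,T_0^*)$ into powers, bound the series in trace norm, evaluate the trace termwise via Theorem~\ref{mainthm1} and \eqref{eq32} to get $2\pi i\sum_n n\hat\psi(n)\hat\xi(-n)$, and match this with the disc integral by the polar-coordinate orthogonality computation on $\mathbb{D}_R$ followed by $R\uparrow 1$. The only (immaterial) difference is that you bound $\|T^n-T_0^n\|_1$ by compressing $U_T^n-U_{T_0}^n$ from the dilation space, whereas the paper telescopes $T^n-T_0^n=\sum_{j}T^{n-j-1}(T-T_0)T_0^{j}$ directly in $\mathcal{H}$.
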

\begin{proof}
	First we note that 
	\begin{equation}\label{eq18}
		\begin{split}
			& \widetilde{\psi}(T,T^*) - \widetilde{\psi}(T_0,T_0^*) = \sum\limits_{n=1}^{\infty} \hat{\psi}(-n)~\big\{T^{*^n}-T_0^{*^n}\big\} +\sum\limits_{n=1}^{\infty} \hat{\psi}(n)~ \big\{T^{^n}-T_0^n\big\}\\
			& = \sum\limits_{n=1}^{\infty} \sum_{j=0}^{n-1}
			\hat{\psi}(-n) ~T^{*^{n-j-1}}(T^*-T_0^*)T_0^{*^j} + \sum\limits_{n=1}^{\infty} \sum_{j=0}^{n-1}
			\hat{\psi}(n) ~T^{{n-j-1}}(T-T_0)T_0^{j}.
		\end{split}
	\end{equation}
	Since $T-T_0\in \mathcal{B}_1(\mathcal{H})$ and $\sum\limits_{n=-\infty}^{\infty}|n\hat{\psi}(n)|<\infty$ because $\psi\in \tilde{\mathcal{G}}$, then both the series in the right hand side of the above equation \eqref{eq18}  converge in trace norm and hence $\widetilde{\psi}(T,T^*) - \widetilde{\psi}(T_0,T_0^*)$ is trace class and furthermore we have 
	\begin{equation*}
		\big\|\widetilde{\psi}(T,T^*) - \widetilde{\psi}(T_0,T_0^*)\big\|_1 \leq \left(\sum\limits_{n=-\infty}^{\infty}|n\hat{\psi}(n)|\right) \|T-T_0\|_1<\infty,
	\end{equation*}
	and 
	\begin{equation}\label{eq19}
		\textup{Tr}_{\mathcal{H}}~\Big\{\widetilde{\psi}(T,T^*) - \widetilde{\psi}(T_0,T_0^*)\Big\} = \sum\limits_{n=1}^{\infty} \hat{\psi}(-n)~\textup{Tr}_{\mathcal{H}} ~\Big\{T^{*^n}-T_0^{*^n}\Big\} +\sum\limits_{n=1}^{\infty} \hat{\psi}(n)~ \textup{Tr}_{\mathcal{H}} ~\Big\{T^{^n}-T_0^n\Big\}.
	\end{equation}
	Next by combining \eqref{eq32} and \eqref{eq19} we get
	\begin{equation*}
		\begin{split}
			\textup{Tr}_{\mathcal{H}} ~\Big\{\widetilde{\psi}(T,T^*) - \widetilde{\psi}(T_0,T_0^*)\Big\} & = \sum\limits_{n=1}^{\infty} \hat{\psi}(-n)~-in \int_{0}^{2\pi} \textup{e}^{-int}~\xi(t)~dt +\sum\limits_{n=1}^{\infty} \hat{\psi}(n)~in \int_{0}^{2\pi} \textup{e}^{int}~\xi(t)~dt\\
			& = 2\pi i \sum\limits_{n=1}^{\infty} \hat{\psi}(-n)~(-n)~\hat{\xi}(n) +\sum\limits_{n=1}^{\infty} \hat{\psi}(n)~(n)~\hat{\xi}(-n)\\
			& = 2\pi i \sum_{n=-\infty}^{\infty} n~\hat{\psi}(n)~\hat{\xi}(-n) = 2\pi  \sum_{n=-\infty}^{\infty} ~\hat{\psi'}(n)~\hat{\xi}(-n)
		\end{split}
	\end{equation*}
	which by applying Lemma~\ref{finallemma} corresponding to  $f=\psi'$ and $g=\xi$ yields
	\begin{equation}\label{eq21}
		\textup{Tr}_{\mathcal{H}} ~\Big\{\widetilde{\psi}(T,T^*) - \widetilde{\psi}(T_0,T_0^*)\Big\} =\int_{0}^{2\pi} \frac{d}{dt}\big\{\psi(\textup{e}^{it})\big\}~\xi(t)~dt =  2\pi i \sum_{n=-\infty}^{\infty} ~n~\hat{\psi}(n)~\hat{\xi}(-n).
	\end{equation}
Also since for $z\in \mathbb{D}$, $|z|<1$, the Jacobian has the expression,
	\begin{equation*}
		\begin{split}
			J\big(\widetilde{\xi},\widetilde{\psi}\big) & = \frac{\partial  \widetilde{\xi}}{\partial z}\frac{\partial  \widetilde{\psi}}{\partial \bar{z}} -\frac{\partial  \widetilde{\psi}}{\partial z}\frac{\partial  \widetilde{\xi}}{\partial \bar{z}}\\
			& = \left(\sum_{n=1}^{\infty} n \hat{\xi}(n) z^{n-1}\right)\left(\sum_{m=1}^{\infty} m\hat{\psi}(-m)\bar{z}^{m-1}\right) - \left(\sum_{m=1}^{\infty} m \hat{\xi}(-m) \bar{z}^{m-1}\right)\left(\sum_{n=1}^{\infty} n\hat{\psi}(n)z^{n-1}\right)\\
			& =\sum_{n,m=1}^{\infty}nm\big\{\hat{\xi}(n)\hat{\psi}(-m)-\hat{\xi}(-m)\hat{\psi}(n)\big\}~z^{n-1}\bar{z}^{m-1},
		\end{split}
	\end{equation*}
where the double series converges absolutely and uniformly for $|z|\leq R<1$. Therefore for $R<1$,
	\begin{equation}\label{eq22}
		\begin{split}
			\int_{\mathbb{D}_{_R}} J\big(\widetilde{\xi},\widetilde{\psi}\big)~dz\wedge d\bar{z}
			& = \int_{\mathbb{D}_{_R}} \left(\sum_{n,m=1}^{\infty}nm\big\{\hat{\xi}(n)\hat{\psi}(-m)-\hat{\xi}(-m)\hat{\psi}(n)\big\}\right)~z^{n-1}\bar{z}^{m-1}~dz\wedge d\bar{z}\\
			& = \sum_{n,m=1}^{\infty}nm\big\{\hat{\xi}(n)\hat{\psi}(-m)-\hat{\xi}(-m)\hat{\psi}(n)\big\}\int_{\mathbb{D}_{_R}}~z^{n-1}\bar{z}^{m-1}~dz\wedge d\bar{z},
		\end{split}
	\end{equation}
	where we have used Fubini’s theorem to interchange the
	summation and integration because
	\begin{equation*}
		\begin{split}
			& \sum_{n,m=1}^{\infty}\big|nm\big\{\hat{\xi}(n)\hat{\psi}(-m)-\hat{\xi}(-m)\hat{\psi}(n)\big\}\big|\left|\int_{\mathbb{D}_{_R}}~z^{n-1}\bar{z}^{m-1}~dz\wedge d\bar{z}\right|\\
			& \leq 4\pi \sum_{n,m=1}^{\infty} nm\big\{|\hat{\xi}(n)||\hat{\psi}(-m)|+|\hat{\xi}(-m)||\hat{\psi}(n)|\big\}\int_{0}^R~r^{n+m-2}~r~dr\\
			&  \leq 4\pi ~\|\xi\|_{L^1([0,2\pi])}\left(\sum_{n=-\infty}^{\infty}|n\hat{\psi}(n)|\right)\sum_{n,m=1}^{\infty} \frac{nm}{n+m}~R^{n+m}<\infty.
		\end{split}
	\end{equation*}
	Finally from \eqref{eq22} we get 
	\begin{equation*}
		\begin{split}
			& \int_{\mathbb{D}_{_R}} J\big(\widetilde{\xi},\widetilde{\psi}\big)~dz\wedge d\bar{z} 
			= -2i\sum_{n,m=1}^{\infty}nm\big\{\hat{\xi}(n)\hat{\psi}(-m)-\hat{\xi}(-m)\hat{\psi}(n)\big\}
			\times \int_0^R\int_0^{2\pi} r^{n+m-2}~\textup{e}^{i(n-m)t}~rdr dt\\
			& = -4\pi i\sum_{n,m=1}^{\infty}nm\big\{\hat{\xi}(n)\hat{\psi}(-m)-\hat{\xi}(-m)\hat{\psi}(n)\big\}~\left(\frac{R^{n+m}}{n+m}\right)~\delta_{nm}
			= 2\pi i \sum_{n=-\infty}^{\infty} ~n~\hat{\psi}(n)~\hat{\xi}(-n)~R^{2n}
		\end{split}
	\end{equation*}
	and hence 
	\begin{equation}\label{eq23}
		\begin{split}
			\lim_{R\uparrow 1}\int_{\mathbb{D}_{_R}} J\big(\widetilde{\xi},\widetilde{\psi}\big)~dz\wedge d\bar{z} = 2\pi i \sum_{n=-\infty}^{\infty} ~n~\hat{\psi}(n)~\hat{\xi}(-n)
		\end{split}
	\end{equation}
	since $\sum\limits_{n=-\infty}^{\infty}|n\hat{\psi}(n)|<\infty$ and $|\hat{\xi}(n)|\leq \|\xi\|_{L^1([0,2\pi])}<\infty$ for all $n\in \mathbb{Z}$. 
	Thus the conclusion of the theorem follows by combining equations \eqref{eq21} and \eqref{eq23}. This completes the proof. 
\end{proof}

\section{Acknowledgements}
The first author (AC) acknowledges the support  from the Mathematical Research Impact
Centric Support (SERB) project by the  Department of Science $\&$ Technology (DST), G.O.I, and the second author (KBS) thanks Indian National Science Academy for its support through the Senior Scientist Scheme.

\end{document}